\documentclass[reqno]{amsart} \usepackage{amscd}
\usepackage{epsf}
%%%%%%%%%%%%%%%%%%%%%%%%%%%%%%%%%%%%%%%%%%%%%%%%%%%%%%%%%%%%%%%%
\newtheorem{theorem}{Theorem}[section]
\newtheorem{proposition}[theorem]{Proposition}

\theoremstyle{remark} 
%%%%%%%%%%%%%%%%%%%%%%%%%%%%%%%%%%%%%%%%%%%%%%%%%%%%%%%%%%%%
%\numberwithin{equation}{section}
%%%%%%%%%%%%%%%%%%%%%%%%%%%%%%%%%%%%%%%%%%%%%%%%%%%%%%%%%%%%
\newcommand{\field}[1]{\ensuremath{\mathbb{#1}}}
\newcommand{\CC}{\field{C}}

\begin{document}

\title[Geometric Quantization ]{ Geometric Quantization of the Hitchin System}

\author{Rukmini Dey}
\address{I.C.T.S., Bangalore\\
rukmini@icts.res.in; rukmini.dey@gmail.com}

\maketitle

\begin{abstract}
This paper is about geometric quantization of the Hitchin system. 
We quantize a Kahler  form on the Hitchin moduli space (which is half the first Kahler form defined by Hitchin) by considering  the  Quillen bundle as the prequantum line bundle and modifying  the Quillen metric using the Higgs field  so that the curvature is proportional to the Kahler form. We show that this Kahler form is 
integral and the Quillen bundle descends as a prequantum line bundle on the moduli space. It is holomorphic and hence one can take holomorphic square integrable sections as the Hilbert space of quantization of the Hitchin moduli space.
\end{abstract}

\section{{\bf Introduction}}

{\bf Geometric Quantization:}

Given a symplectic manifold $(M, \omega)$, with $\omega $ integral (i.e. its cohomology class is in $H^2(M, {\mathbb Z})$, 
geometric pre-quantization is the construction of a line bundle whose curvature $\rho$ is proportional to the symplectic form. This is always possible as long as $\omega$ is integral. The method of quantization,  developed by Kostant and Souriau,  assigns to functions $f \in C^{\infty}(M)$, an operator, $\hat{f} = -i \nabla_{X_f} +   f$ acting on the Hilbert space of square integrable sections of $L$. Here $\nabla =d - i \theta $  where $ \omega = d \theta $ locally and $X_f $ satisfies $\omega (X_f, \cdot) = -df (\cdot)$. This assignment has the property that  that the Poisson bracket (induced by  the symplectic form), namely, 
$\{ f_1, f_2 \}_{PB} $ corresponds  to an operator proportional to the commutator $[\hat{f}_1, \hat{f}_2]$ for any two functions $f_1 , f_2$.  

The Hilbert space obtained from the prequantization is too big for most purposes.
Geometric quantization involves  construction of a polarization of the symplectic manifold such that we now take polarized sections of the line bundle, yielding a finite dimensional Hilbert space in most cases. However, $\hat{f}$ does not 
map the polarized Hilbert space to the polarized Hilbert space  in general. Thus only a few observables from the set of all  $f \in C^{\infty}(M)$ are quantizable.

The general reference for this is Woodhouse, ~\cite{W}.

{\bf The Determinant line bundle of Quillen}

${\mathcal A} =$space of unitary connections on a vector bundle $E$
associated to
a principal $G$ bundle on a Riemann surface $\Sigma$. Can  identify 
${\mathcal A} = {\mathcal A}^{0,1}$ (since $A^{1,0*} = - A^{0,1}$) . 
On this infinite-dimensional space, Quillen defines  the determinant line bundle, ~\cite{Q}. 
One defines the Cauchy-Riemann operator on $\Sigma$ locally written as $d \bar{z} \frac{\partial}{\partial{\bar{z}}} + A^{0.1}$ which 
acts on sections of the vector bundle $E$.  We denote it by $\bar{\partial}_A$.
Construct a line bundle ${\mathcal L}$ on ${\mathcal A}^{0,1}$ as follows. 

The fiber on  $A^{0,1}$ is ${\rm det} \, \bar{\partial}_A = \wedge^{\rm{top}} ({\rm Ker} \, \bar{\partial}_A)^{*} \otimes \wedge^{\rm{top}}({\rm Coker} \, \bar{\partial}_A)$ 

The dimension of  ${\rm Ker} \, \bar{\partial}_A $ jumps even locally but the whole object makes sense --after certain identifications (Quillen's ingenious construction). Quillen further shows that  ${\mathcal L}$ carries a metric and a connection s.t. the K\"{a}hler potential is $ \frac{\partial \zeta_A}{\partial s} (0)$ where $\zeta_A$ is the $zeta$ function corresponding to the Laplacian of $\bar{\partial}_{A}$. The
 curvature turns out to be   proportional to 
$\Omega_Q(\alpha, \beta) =
- \frac{1}{2}\int_{\Sigma} {\rm Tr} (\alpha \wedge \beta), $ the K\"{a}hler form on ${\mathcal A}$. 
(the proportionality constant being $\frac{i}{2 \pi}$), ~\cite{Q}. We have shown the curvature computation in some detail in  ~\cite{D3}. 

{\bf The Hitchin system:}

In ~\cite{D} we described the Hitchin system  in a slightly unconventional  notation,  as follows.
Let $\Sigma$ be a compact Riemann 
surface of genus $g>1$ and let $P$ be a principal 
$U(n)$-bundle over $\Sigma$. Let $A$ be a unitary connection on $P$, 
i.e. $A = A^{(1,0)} + A^{(0,1)}$ such that 
$A^{(1,0)} = -A^{(0,1)*}$, where $*$ denotes conjugate transpose. Thus we can identify the space of all unitary 
connections with its $(0, 1)$-part, i.e. $A^{(0,1)}.$ 
Let $\Phi^{1,0}$ be a complex Higgs field, such that
$\Phi^{1,0} \in {\mathcal H} = \Omega^{1,0}(M; {\rm ad} P \otimes \CC)$. 

{\bf Note:} In ~\cite{H} this $\Phi^{1,0}$ is written as $\Phi$. But we will 
be using the present notation since we will need $\Phi^{0,1}$, defined as $\Phi^{(0,1)} = - \Phi^{(1,0)*}$ and our
$\Phi = \Phi^{1,0} + \Phi ^{0,1}.$ 
 
The pair $(A, \Phi^{1,0})$ will be said to satisfy the self-duality equations if

$(1)\rm{\;\;\;\;\;}F(A) = -[\Phi^{(1,0)}, \Phi^{(1,0)*}],$

$(2)\rm{\;\;\;\;\;}d^{\prime\prime}_A \Phi^{(1,0)} = 0.$

Here $F(A)$ is the curvature of the connection $A.$
The operator $d_A^{\prime \prime}$ is the $(0,1)$ part of the extension of the 
covariant derivative operator to act on $\Omega^{1,0}(M, {\rm ad} P \otimes \mathbb{C})$.  
There is a gauge group acting on the space of $(A, \Phi)$ which leave the 
equations invariant. If $g$ is an $U(n)$ gauge transformation then 
$(A_1, \Phi_1)$ and $(A_2, \Phi_2)$ are gauge equivalent if 
$d_{A_2} g = g d_{A_1}$ and $\Phi_2 g = g \Phi_1$ ~\cite{H}, page 69.
  Taking the quotient by the gauge group of the solution 
space to $(1)$ and $(2)$ gives  the moduli space of solutions to these 
equations and is denoted by ${\mathcal M}_H$.
Hitchin shows that there is a natural metric on the moduli space 
${\mathcal M}_H$ and further  proves that the metric is 
hyperK\"{a}hler ~\cite{H}.

Let the configuration space be defined as ${\mathcal C} = \{ (A^{0,1}, \Phi^{1,0})| 
A^{0,1} \in {\mathcal A}, \Phi^{1,0} \in {\mathcal H} \}$ where
${\mathcal A}$ is the space of unitary connections on $P$, identified with its $A^{(0,1)}$ part  
and 
${\mathcal H} = \Omega^{(1,0)}(\Sigma, {\rm ad} P \otimes \CC)$ is the space 
of Higgs fields. 
   
We can identify the space of unitary connections with its $(0,1)$ part 
and the tangent vector is also $(0,1)$ part of a $1$-form
Let $\alpha^{(0,1)}, \beta^{(0,1)} \in T_{A} {\mathcal A} = \Omega^{(0,1)} (\Sigma, {\rm ad} P\otimes \CC) $, such that $\alpha^{(1,0)} = - \alpha^{(0,1)*}$ and   $\beta^{(1,0)} = -\beta^{(0,1)*}$. 
Let
$\gamma^{(1,0)}, \delta^{(1,0)} \in T_{\Phi} {\mathcal H} = \Omega^{(1,0)} (\Sigma, \rm{ad} P \otimes \CC )$.
Let us extend $\gamma^{(1,0)}, \delta^{(1,0)}$ by defining 
$\gamma^{(0,1)} = -\gamma^{(1,0)*}$ and $\delta^{(0,1)} = - \delta^{(1,0)*}$. 
Thus, $\alpha = \alpha^{(0,1)} + \alpha^{(1,0)}$ and $ \beta = \beta^{(0,1)} +
\beta^{(1,0)}$, $\gamma = \gamma^{(0,1)} + \gamma ^{(1,0)}$ and $\delta = \delta^{(0,1)} + \delta^{(1,0)}$, i.e.  $\alpha, \beta, \gamma, \delta  \in \Omega^1(M, {\rm ad} P).$
Let $X, Y$ be two tangent vectors to the configuration space, given
 by $X= (\alpha^{(0,1)}, \gamma^{(1,0)})$, and $Y= (\beta^{(0,1)} , \delta^{(1,0)})$.

 In ~\cite{H}, (page 79 and page 88), Hitichin defines the metric on the moduli space
${\mathcal M}_H$ given as follows.
  
On $ T_{(A, \Phi)} {\mathcal C} = T_A {\mathcal A} \times T_{\Phi} {\mathcal H}$which is $\Omega^{(0,1)}(\Sigma, {\rm ad} P \otimes \CC) \times 
\Omega^{(1,0)}(\Sigma, {\rm ad} P \otimes \CC)$, Hitchin defines 
the metric $g_1$ such that (in our notation)
\begin{eqnarray*}
& & g_1((\alpha^{(0,1)},\gamma^{(1,0)}), (\alpha^{(0,1)}, \gamma^{(1,0)})) \\
&=& 2i  \int_{\Sigma} {\rm Tr} (\alpha^{(0,1)*} \wedge \alpha^{(0,1)})
+ 2i  \int_{\Sigma} {\rm Tr} (\gamma^{(1,0)} \wedge \gamma^{(1,0)*})
\end{eqnarray*}

As in ~\cite{D}, let us define a metric on the configuration space as
\begin{eqnarray*}
& &  g (X, Y) = g ((\alpha^{(0,1)}, \gamma^{(1,0)}), (\beta^{(0,1)} , \delta^{(1,0)}))  \\  &=&  -   \int_{\Sigma} {\rm Tr} (   \alpha \wedge *_1 \beta) - 2 {\rm Im} 
\int_{\Sigma} {\rm Tr}  ( \gamma^{(1,0)} \wedge *_2 \delta^{(1,0){\rm tr}}) 
\end{eqnarray*}
 Here Tr denotes  trace, * denotes conjgate tranpose,  $*_1$ denotes 
the Hodge star taking $dx$ forms to $dy$ forms and $dy$ forms to $-dx$ forms 
(i.e. $*_1 (\eta dz) = -i \eta dz$ and 
$*_1(\bar{\eta} d \bar{z}) = i \bar{\eta} d \bar{z}$)  and 
 $*_2$ denotes the  operation (another Hodge star), 
 such that $*_2(\eta dz) = \bar{\eta} d \bar{z}$ and 
$*_2(\bar{\eta} d \bar{z}) =  -\eta d z$. Here tr denotes transpose. See ~\cite{D} for details.

In ~\cite{D} we checked that this coincides with the  metric on the moduli space
${\mathcal M}_H$ given by Hitchin, namely $g = g_1$.

There is a complex structure, the descendent of the complex structure on the configuration space given by
$${\mathcal I} = \left[ \begin{array}{cc}
i   & 0 \\
 0  & i
\end{array} \right]. $$ 

We let 
$\Omega (X, Y)  = g(X, {\mathcal I}Y) = \int _{\Sigma}{\rm Tr} (\alpha \wedge \beta) - \int_{\Sigma} {\rm Tr} (\gamma \wedge \delta)$, see ~\cite{D} for details.

With notation as in ~\cite{D}, let $$\tilde{\Omega}(X, Y)  = \frac{1}{2} g({\mathcal I}X, Y) = - \frac{1}{2}   \int_{\Sigma} {\rm Tr} (\alpha \wedge \beta )+ \frac{1}{2} \int_{\Sigma} {\rm Tr} (\gamma \wedge \delta)$$ denote a Kahler form on the configuration space which descends to the moduli space ${\mathcal M}_H$. (This Kahler form  is a factor of $\frac{1}{2}$ times the first Kahler form on the Hitchin moduli space as defined in ~\cite{H}).

In this paper, we quantize the  Kahler form  $\tilde{\Omega}$ on the Hitchin moduli space ~\cite{H} by considering  the  Quillen bundle as the prequantum line bundle and modifying  the Quillen metric using the Higgs field  so that the curvature is proportional to the Kahler form.

Note that in ~\cite{D}, we had quantized  the first Kahler  form on the Hitchin moduli space  differently. There  one had to choose a gauge equivalent class of a fixed connection $A_0$, which seems unnatural.

\section{{\bf Prequantization and Quantization}}

Define a prequantum bundle ${\mathcal P} = \text{det} \bar{ \partial}_A $, the Quillen determinant line bundle, ~\cite{Q},  on the configuration space with the modified Quillen metric:
$exp ( - \zeta_A^{\prime}(0))$ modified by a factor $exp ( \frac{i}{4 \pi} \int_{\Sigma} \text{Tr}  ((\Phi^{1,0}  + \Phi^{0,1}))  \wedge \overline{(\Phi^{1,0} + \Phi^{0,1})}) )$.

The first term  in the metric contributes  $-\frac{i}{4 \pi}  \int_{\Sigma} \text{Tr} (\alpha \wedge \beta) $ to the curvature of ${\mathcal P}$,  ~\cite{Q},~\cite{D3}.
A simple calculation shows that the second term in the metric contributes 
\begin{eqnarray*}
\frac{i}{4 \pi} (\delta_{\Phi^{1,0}} \delta_{ \overline{\Phi^{1,0}}}  \int_{\Sigma}\text{Tr}  ((\Phi^{1,0} + \Phi^{0,1})  \wedge \overline{(\Phi^{1,0} + \Phi^{0,1}})) ) (X, Y) =\frac{i}{4 \pi} \int_{\Sigma}\text{Tr}  (\gamma \wedge \delta)
\end{eqnarray*}
to the curvature of ${\mathcal P}$.

In fact, in physicists' notation, 
\begin{eqnarray*}
& & \delta_ {\Phi^{1,0}} \delta_ {\overline{\Phi^{1,0}}} ( \int_{\Sigma}\text{Tr}  ((\Phi^{1,0} + \Phi^{0,1})  \wedge \overline{(\Phi^{1,0} + \Phi^{0,1}}))) \\
& =& \delta_ {\Phi^{1,0}} \delta_ {\Phi^{0,1}} ( \int_{\Sigma}\text{Tr}  ((\Phi^{1,0} + \Phi^{0,1})  \wedge (\Phi^{0,1} + \Phi^{1,0}))) \\
&=& \int_{\Sigma} \text{Tr} (\delta \Phi^{1,0} \wedge \delta \Phi^{0,1}) +  \int_{\Sigma} \text{Tr} (\delta \Phi^{0,1} \wedge \delta \Phi^{1,0})\\
&=&  \int_{\Sigma} \text{Tr} (\delta(\Phi^{1,0} + \Phi^{0,1}) \wedge \delta(\Phi^{0,1} + \Phi^{1,0}))
\end{eqnarray*}

Thus we have the following proposition: 
\begin{proposition}
The curvature of ${\mathcal P}$ is precisely $\frac{i}{2 \pi} \tilde{\Omega}$ on the configuration space.
\end{proposition}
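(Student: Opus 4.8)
The plan is to read off the curvature of $\mathcal{P}$ directly from its Hermitian metric, using the fact that the curvature of a holomorphic line bundle depends only additively on the logarithm of the metric. Since the modified Quillen metric is the product of Quillen's metric $\exp(-\zeta_A'(0))$ with the Higgs factor $\exp(\frac{i}{4\pi}\int_\Sigma \text{Tr}((\Phi^{1,0}+\Phi^{0,1})\wedge\overline{(\Phi^{1,0}+\Phi^{0,1})}))$, its potential is the sum of the two corresponding potentials, and the curvature $\partial\bar\partial$ of that potential therefore splits into two independent contributions. I would compute each separately and then add them, evaluating everything on the fixed tangent vectors $X=(\alpha^{0,1},\gamma^{1,0})$ and $Y=(\beta^{0,1},\delta^{1,0})$.

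For the first contribution I would simply invoke Quillen's theorem, recalled in the introduction and computed in detail in \cite{Q} and \cite{D3}, that the unmodified determinant line bundle carries curvature $\frac{i}{2\pi}\Omega_Q$ with $\Omega_Q(\alpha,\beta)=-\frac{1}{2}\int_\Sigma\text{Tr}(\alpha\wedge\beta)$; on $X,Y$ this is the term $-\frac{i}{4\pi}\int_\Sigma\text{Tr}(\alpha\wedge\beta)$, depending only on the connection directions. For the second contribution I would use that the Higgs factor depends on $\Phi$ alone, so its curvature is $\frac{i}{4\pi}$ times the mixed second variation $\delta_{\Phi^{1,0}}\delta_{\overline{\Phi^{1,0}}}$ of the potential $\int_\Sigma\text{Tr}(\Phi\wedge\bar\Phi)$. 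Carrying out that variation, as in the calculation preceding the statement, collapses the four-term expansion to $\int_\Sigma\text{Tr}(\delta(\Phi^{1,0}+\Phi^{0,1})\wedge\delta(\Phi^{0,1}+\Phi^{1,0}))$, which on $X,Y$ evaluates to $\int_\Sigma\text{Tr}(\gamma\wedge\delta)$, giving the contribution $\frac{i}{4\pi}\int_\Sigma\text{Tr}(\gamma\wedge\delta)$.

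Adding the two pieces yields $-\frac{i}{4\pi}\int_\Sigma\text{Tr}(\alpha\wedge\beta)+\frac{i}{4\pi}\int_\Sigma\text{Tr}(\gamma\wedge\delta)$, which is exactly $\frac{i}{2\pi}$ times $-\frac{1}{2}\int_\Sigma\text{Tr}(\alpha\wedge\beta)+\frac{1}{2}\int_\Sigma\text{Tr}(\gamma\wedge\delta)=\tilde{\Omega}(X,Y)$, as claimed. The main obstacle I anticipate lies in the second step: one must check that the Higgs potential $\frac{i}{4\pi}\int_\Sigma\text{Tr}(\Phi\wedge\bar\Phi)$ is genuinely real, so that it defines a legitimate metric modification, and one must track the $(1,0)$/$(0,1)$ type decompositions together with the antisymmetry of the wedge product carefully, so that the cross terms combine with the correct sign and leave precisely $\int_\Sigma\text{Tr}(\gamma\wedge\delta)$ rather than a spurious term that would break the match with $\tilde{\Omega}$.
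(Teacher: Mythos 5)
Your proposal follows exactly the paper's own argument: it splits the curvature into the Quillen contribution $-\frac{i}{4\pi}\int_\Sigma \mathrm{Tr}(\alpha\wedge\beta)$, cited from \cite{Q} and \cite{D3}, plus the mixed second variation $\delta_{\Phi^{1,0}}\delta_{\overline{\Phi^{1,0}}}$ of the Higgs potential giving $\frac{i}{4\pi}\int_\Sigma \mathrm{Tr}(\gamma\wedge\delta)$, and sums them to obtain $\frac{i}{2\pi}\tilde{\Omega}$. The computation and the bookkeeping of constants are correct, so nothing further is needed.
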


\begin{proposition}
The descendant of this Kahler form on the  Hitchin moduli space, ${\mathcal M}_H$,  is in fact  integral.
\end{proposition}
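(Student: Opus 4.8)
The cohomology class to be controlled lives on $\mathcal{M}_H$, so I first note that integrality is genuinely a statement about the quotient: the configuration space $\mathcal{C}=\mathcal{A}\times\mathcal{H}$ is affine, hence contractible, so $H^2(\mathcal{C},\ZZ)=0$ and $\tilde{\Omega}$ is automatically exact upstairs. The plan is therefore to produce, on $\mathcal{M}_H$ itself, a genuine complex line bundle whose curvature is the descended form $\frac{i}{2\pi}\tilde{\Omega}$. Since the first Chern class of any complex line bundle is integral and is represented by its curvature form, this exhibits $[\tilde{\Omega}]$, in the normalization fixed by the prequantization convention of the Introduction, as an integral class; this is the converse to the prequantization criterion stated there, namely that the existence of a prequantum bundle forces the form to be integral. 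The only candidate is the descent of the modified Quillen bundle $\mathcal{P}$ of the preceding Proposition, whose curvature on $\mathcal{C}$ has already been computed to be $\frac{i}{2\pi}\tilde{\Omega}$.

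Thus the whole argument reduces to descent of $\mathcal{P}$ along $\mathcal{C}_0\to\mathcal{M}_H=\mathcal{C}_0/\mathcal{G}$, where $\mathcal{C}_0\subset\mathcal{C}$ is the locus of solutions to the self-duality equations $(1)$ and $(2)$ and $\mathcal{G}$ is the unitary gauge group. First I would check that $\mathcal{P}$ is $\mathcal{G}$-equivariant: a gauge transformation $g$ sends $\bar{\partial}_A$ to $g\bar{\partial}_A g^{-1}$, which induces a canonical isomorphism of the fibers $\det\bar{\partial}_A$ and so lifts the $\mathcal{G}$-action from $\mathcal{C}$ to $\mathcal{P}$. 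Next I would verify that the Higgs modification of the metric is gauge-invariant: under $g$ one has $\Phi^{1,0}\mapsto g\Phi^{1,0}g^{-1}$, and because the trace is conjugation-invariant the factor $\exp(\frac{i}{4\pi}\int_{\Sigma}\text{Tr}((\Phi^{1,0}+\Phi^{0,1})\wedge\overline{(\Phi^{1,0}+\Phi^{0,1})}))$ is unchanged. Hence the modified Quillen metric, its Chern connection, and its curvature are all $\mathcal{G}$-invariant; in particular $\frac{i}{2\pi}\tilde{\Omega}$ is basic, which reconfirms that it descends as a $2$-form, and the equivariant structure descends $\mathcal{P}$ to a line bundle on the smooth locus of $\mathcal{M}_H$.

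The main obstacle will be the action of the stabilizers on the fibers of $\det\bar{\partial}_A$, in particular the constant central gauge transformations $U(1)\subset\mathcal{G}$. These fix every point of $\mathcal{M}_H$ yet act on the determinant fiber $(\det\ker\bar{\partial}_A)^{*}\otimes\det\mathrm{coker}\,\bar{\partial}_A$ by a weight equal to minus the index of $\bar{\partial}_A$; for $\mathcal{P}$ to descend to an honest line bundle this central weight must be accounted for, exactly as in the classical descent of the determinant bundle to the moduli space of stable bundles, either by working on an appropriate locus where the index is compatible or by passing to a suitable power of $\mathcal{P}$. I expect this weight bookkeeping, together with the need to restrict to the smooth irreducible locus of the noncompact and in general singular $\mathcal{M}_H$, to be the delicate point. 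Once the descent is secured, the holomorphicity of $\mathcal{P}$ (Quillen's construction, together with the holomorphic dependence of the Higgs term on $\Phi^{1,0}$) produces a holomorphic line bundle on $\mathcal{M}_H$ whose integral first Chern class is represented by $\frac{i}{2\pi}\tilde{\Omega}$, and the integrality of the descended Kähler form follows.
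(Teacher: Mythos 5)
Your strategy---produce an honest line bundle on $\mathcal{M}_H$ itself with curvature proportional to the descended $\tilde{\Omega}$ and invoke integrality of a first Chern class---is sound in principle, but it makes the whole proposition hinge on the descent of $\mathcal{P}$, and that is exactly the step you leave open. The obstruction you flag is real and is not mere bookkeeping: a constant central gauge transformation $e^{i\theta}$ fixes every point of the configuration space but acts on the fiber $\det \bar{\partial}_A$ with weight $-\,\mathrm{ind}\,\bar{\partial}_A = -(\deg E + n(1-g))$ by Riemann--Roch. Neither of your proposed remedies closes this gap: the index is a topological constant, the same at every point of $\mathcal{C}$, so there is no ``locus where the index is compatible'' to restrict to; and replacing $\mathcal{P}$ by a tensor power $\mathcal{P}^{\otimes m}$ only rescales the $U(1)$-weight to $-m\,\mathrm{ind}\,\bar{\partial}_A$, which never vanishes unless it already did (powers help only when the stabilizer is finite, e.g.\ for $SU(n)$, not for the central $U(1)$ inside the $U(n)$ gauge group). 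The genuine fixes---arranging $\deg E = n(g-1)$, passing to fixed-determinant data, or twisting by a correction bundle such as a power of $\det E_p$ for a marked point $p$, as in the Beauville--Drezet--Narasimhan theory---each modify either the statement or the bundle, and would have to be carried out and shown not to spoil the curvature computation. So, as written, the proof is incomplete at its load-bearing step.

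The paper avoids all of this: its proof of integrality never uses descent of $\mathcal{P}$. Instead it pulls the class back along $f:\mathcal{M}_1 \to \mathcal{M}_H$, $E \mapsto (E,0)$, where $\mathcal{M}_1$ is the moduli space of stable bundles. Factoring $f$ as $\mathcal{M}_1 \hookrightarrow T^*\mathcal{M}_1 \to \mathcal{M}_H$, the first map is a cohomology isomorphism (contractible fibers), and the second induces an isomorphism on $H^2$ because $\mathcal{M}_H \setminus T^*\mathcal{M}_1$ has complex codimension at least $2$ (Hitchin's estimate) together with a Mayer--Vietoris argument. Under $f^*$ the descended $\tilde{\Omega}$ maps to the standard Kähler form on $\mathcal{M}_1$, whose integrality is classical (it is, up to the prequantization constant, the curvature of the determinant bundle on $\mathcal{M}_1$). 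Since $f^*$ is an isomorphism on integral $H^2$, integrality on $\mathcal{M}_H$ follows with no stabilizer analysis at all. To salvage your route you would essentially have to redo the classical descent of the determinant bundle, with its central-weight corrections, in the Higgs setting---precisely the work the paper's topological argument was designed to bypass; note that the paper does assert descent of $\mathcal{P}$ later (citing earlier work), but deliberately does not rest the integrality proposition on it.
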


\begin{proof}
 We give an argument, originally due to Biswas, ~\cite{B}.

The map $f(E) = (E, 0)$ (i.e. setting $\Phi =0$) takes  $ {\mathcal M}_{1} \rightarrow {\mathcal M}_H$, where ${\mathcal M}_1$ is the moduli space of  stable vector bundles. It in fact  induces an isomorphism $f^*: H^2({\mathcal M}_H, {\mathbb Z}) \rightarrow H^2({\mathcal M}_1, {\mathbb Z})$. It is easy to see this. 

The function $f$ can be thought of the composition of ${\mathcal M}_1 \rightarrow {\mathcal T^*M}_1 \rightarrow {\mathcal M}_H$. The inclusion of ${\mathcal M}_1$ in ${\mathcal T^*M}$ given by $E \rightarrow (E, 0)$ can be easily seen to give isomorphism in homology (since the fibers are contractible). The last map induces a isomorphism at the $H^2$ cohomology level because 
$codim({\mathcal M}_H - T^* {\mathcal M}_1)  \geq 2$. The codimension estimate is given in ~\cite{H2}.
    Given the codimension estimate, it easily follows, by Mayer-Vietoris sequence that the map from  ${\mathcal T^*M}_1 \rightarrow {\mathcal M}_H$ induces isomorphism at $H^2$ level.  (This is because of a general fact that if S is a complex analytic subspace of  a complex manifold $X$
such that the complex codimension of $S$ is at least two, then
the inclusion $X-S  \rightarrow  X$ induces an isomorphism of $H^2$. This
follows from a Mayer-Vietoris sequence). 

The  Kahler form which is the descendent of $\tilde{\Omega}$ on the Hitchn moduli space,  gets mapped  to the usual Kahler form on 
${\mathcal M}_1$ (namely,  the one corresponding to  $ -\frac{1}{2} \int_{\Sigma} {\rm Tr} (\alpha \wedge \beta)$).  This is well known to be integral since there is a determinant bundle on the moduli space ${\mathcal M}_1$ whose curvature is proportional to this Kahler form. This is apparent in the works of Beauville, Drezet, Narasimhan  and others.  Since $f^*$ is an isomorphism, it follows that the Kahler form corresponding to $\tilde{\Omega}$ 
on ${\mathcal M}_H$ is also integral.
\end{proof}
\subsection{The Quantum bundle}

 It can be shown by usual arguments as in  (~\cite{D}, ~\cite{D2}, ~\cite{D3})
${\mathcal P} $ descends to the moduli space of the Hitchin system. Its   curvature is proportional to the  Kahler form which is the descendent of $\tilde{\Omega}$, the constant of proportionality being $\frac{i}{2 \pi} $. Thus it is a prequantum bundle. In fact one can show that ${\mathcal P}$ is  holomorphic with respect to its first holomorphic structure ${\mathcal I}$, ~\cite{D}. Thus we can take square integrable  holomorphic sections as the Hilbert space of quantization. 

{\bf Remarks:}

1. The integerality question is interesting by itself. For instance for the vortex moduli space, the standard symplectic form is integral if the Riemann surface has integral volume. This has been showed by Manton and Nasir when they  computed the volume of the vortex  moduli space ~\cite{MN}.

2. The author came across this method of modifying the Quillen metric for the first time  in ~\cite{BR} where the authors used the algebro-geometric definition of the Quillen bundle in the context of stable triples.

3. Quantization of the other two symplectic forms (which are exact as shown for example in ~\cite{D}) is also possible, where again certain Quillen bundles play the role of the prequantum bundle. This has been showed in ~\cite{D}.

{\bf Acknowledgement:} The author would like to thank Professor Leon Takhtajan for introducing her to Quillen bundles in the first place and Professor Indranil Biswas for the very useful discussions.

\end{document}